\newtheorem{theorem}{Theorem}[section]
\newtheorem{corollary}[theorem]{Corollary}
\newtheorem{lemma}[theorem]{Lemma}
\newcommand{\be}{\begin{equation}}
\newcommand{\ee}{\end{equation}}
\newcommand{\bea}{\begin{eqnarray*}}
\newcommand{\eea}{\end{eqnarray*}}
\newcommand{\bc}{\begin{center}}
\newcommand{\ec}{\end{center}}
\newcommand{\ba}{\begin{align}}
\newcommand{\ea}{\end{align}}
\newcommand{\by}{\begin{eqnarray}}
\newcommand{\ey}{\end{eqnarray}}
\newcommand{\hs}{\hspace{.3in}}
\newcommand{\CC}{\mathbb{C}}
\newcommand{\mnc}{M_n\,(\mathbb{C})}
\newfont{\bb}{msbm10}
\renewcommand{\l}{\langle}
\renewcommand{\r}{\rangle}
\date{}
\begin{document}
\title{Rank one perturbation with a generalized eigenvector}
\author{
Faith Zhang\\ \\
Department of Mathematics and Statistics\\
University of Massachusetts Amherst\\
Amherst, MA 01003\\
({\tt  yzhang@math.umass.edu})}
\maketitle

\begin{abstract}
The relationship between the Jordan structures of two matrices sufficiently close has been largely studied in the literature, among which a square matrix $A$ and its rank one updated matrix of the form $A+xb^*$ are of special interest. The eigenvalues of $A+xb^*$, where $x$ is an eigenvector of $A$ and $b$ is an arbitrary vector,  were first expressed in terms of eigenvalues of $A$ by Brauer in 1952. Jordan structures of $A$ and $A+xb^*$ have been studied, and similar results were obtained when a generalized eigenvector of $A$ was used instead of an eigenvector. However, in the latter case, restrictions on $b$ were put so that the spectrum of the updated matrix is the same as that of $A$. There does not seem to be results on the eigenvalues and generalized eigenvectors of $A+xb^*$ when $x$ is a generalized eigenvector and $b$ is an arbitrary vector. In this paper we show that the generalized eigenvectors of the updated matrix can be written in terms of those of $A$ when a generalized eigenvector of $A$ and  an arbitrary vector $b$ are involved in the perturbation.
\end{abstract}

\vskip.25in
\textit{Keywords:} rank-one perturbation, generalized eigenvectors, Jordan structure 

\textit{AMS Subject Classifications:}
15A15, 
15A18, 
15A23 

\newpage
\section{Introduction}
In general, it is a very difficult problem to study the spectrum and the generalized eigenvectors of $A+xb^*$, given the spectrum and generalized eigenvectors of $A$,  when $x$  and $b$ are both arbitrary vectors. Note that $A+xb^*$ is a rank one updated matrix of $A$. \cite{T} showed how similarity invariants of matrices with elements in a field change under a rank one update. The perturbation theory of structured matrices under generic structured rank one perturbations was considered and generic Jordan structures of perturbed matrices were identified in \cite{MMRR}.
  
In 1945, \cite{B} described the relationship among eigenvalues of a given square matrix $A$ and $A+xb^*$, where $x$ is an eigenvector of $A$  and $b$ is an arbitrary vector. The result, known as the Brauer’s Theorem, reveals that $\bigg|\sigma(A+xb^*)\setminus\sigma(A)\bigg|\leq 1$, i.e., only one eigenvalue can be possibly updated during the perturbation. We will call $A+xb^*$ an eigenvector-updated (EU) matrix of $A$ when $x$ is an eigenvector of $A$. Brauer’s Theorem is in fact
related to older and well-known results on Wielandt’s and Hotelling’s deflations techniques \cite{H}.  

  The relations between the Jordan structures of a matrix and its EU matrix have been studied by \cite{BCU}, and the generalized eigenvectors of the updated matrix can be written in terms of the generalized eigenvectors of $A$. Similar results were discovered when $A+xb^*$ is obtained from a generalized eigenvector, that is, $x$ is a generalized eigenvector of $A$. We will call this $A+xb^*$ a generalized-eigenvector-updated (GEU) matrix of $A$. However, in the case of $x$ being a generalized eigenvector, restrictions on $b$ were put so that the spectrum of the updated matrix is the same as that of $A$. In this paper, we consider $b$ as an arbitrary vector and show that the generalized eigenvectors of the updated matrix can still be written in terms of the generalized eigenvectors of $A$, and the coefficients can be found recursively. 

Some applications of rank one perturbations with eigenvectors have been studied.  
Construction of nonnegative matrices with a prescribed spectrum was studied in \cite{PH}. 
The eigenvalue localization problem of control theory and stabilization of control systems were discussed in \cite{KT} . In \cite{Z} the authors worked on connections between pole assignment and assignment of invariant factors on matrices with some prescribed submatrices. 
The application to the nonnegative inverse eigenvalue problem can be followed in \cite{SR}. Some deflation problems and pole assignment for single-input single-output and multi-input multi-output systems were studied by \cite{BCSU},  to obtain a block version of deflation results,  using their result of the relationships between the right and left eigenvectors of a matrix and its EU matrix. The recursive construction of all matrices with positive principal minors (P-matrices) was studied in \cite{TZ}. These applications all motivate the study of rank one perturbations involving eigenvectors and generalized eigenvectors. 



\section{Notation and preliminaries}

Throughout this paper, we work with an $n\times n$ matrix $A$ in the set $\mnc$, and let $b\in\CC^n$ be an arbitrary $n$-dimensional vector. Denote the spectrum of $A$ by $\sigma(A)$ and the cardinality of a set $\alpha$ by $|\alpha|$. For a positive integer $m$, let $\l m\r=\{1,2,\ldots,m\}$.
 A vector $x_{m}$ is a generalized eigenvector of rank $m$ of $A$  corresponding to the eigenvalue $\lambda$ if
     $(A-\lambda I)^{m}x_{m}=0$ and  $(A-\lambda I)^{m-1}x_{m}\neq 0$. Clearly, a generalized eigenvector of rank 1 is an ordinary eigenvector. Let $J_r(\lambda)$ denote the standard $r \times r$ upper triangular Jordan block with eigenvalues $\lambda$:
\begin{center}
$J_r(\lambda)=\begin{bmatrix}
\lambda&1&0&\cdots&0\\0&\ddots&\ddots&\ddots&\vdots\\ \vdots&\ddots&\ddots&\ddots&0\\ \vdots&&\ddots&\lambda&1\\0&\cdots&\cdots&0&\lambda
\end{bmatrix}_{r \times r}$.
\end{center}
For notational simplicity, we make the following assumptions: \begin{itemize}
	\item Whenever a subscript or superscript is $0$, the corresponding scalar or vector is $0$ by default.
	\item When a result is in fraction form, we assume that we have only considered the case where the denominator is nonzero. 
\end{itemize}
Therefore, generalized eigenvectors of $A$ in a Jordan chain of $\lambda$ satisfy $Ax_m=\lambda x_m+x_{m-1}$ ($m$ can be 1 by the first assumption listed above). 
Our main effort will be devoted into finding generalized eigenvectors of a GEU matrix $A+x_mb^*$. 
We first include the following well-known Matrix Determinant Lemma and a proof for completeness.
\begin{lemma} 
	\label{MDL}
	Let $A\in\mnc$ be invertible and $x, b\in\CC^n$. Then 
$$\det \left( A + x b^* \right) = \left(b^* A^{ - 1} x+1 \right) \,\det (A) \ .$$
\end{lemma}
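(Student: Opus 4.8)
The plan is to factor the invertible matrix $A$ out of the perturbed matrix and reduce the statement to a rank-one determinant identity. Since $A$ is invertible, we may write $A + xb^* = A\bigl(I + A^{-1}xb^*\bigr)$, so by multiplicativity of the determinant, $\det(A + xb^*) = \det(A)\,\det\bigl(I + A^{-1}xb^*\bigr)$. Setting $u = A^{-1}x$, it therefore suffices to establish the identity $\det(I + ub^*) = 1 + b^*u$; substituting back $b^*u = b^*A^{-1}x$ then yields the claimed formula.

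To prove $\det(I + ub^*) = 1 + b^*u$, I would use the Schur complement applied to the $(n+1)\times(n+1)$ block matrix
$$M = \begin{bmatrix} I & -u \\ b^* & 1 \end{bmatrix}.$$
Computing $\det M$ by taking the Schur complement of the lower-right $1\times 1$ corner gives $\det M = \det(I + ub^*)$, while taking the Schur complement of the upper-left block $I$ gives $\det M = \det\bigl(1 + b^*u\bigr) = 1 + b^*u$. Equating the two expressions yields the identity.

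Alternatively, and this is the route I would present if I preferred to avoid invoking block-determinant formulas, one can argue spectrally: the rank-one matrix $ub^*$ has $u$ as an eigenvector with eigenvalue $b^*u$, and its remaining $n-1$ eigenvalues are $0$, since its image is spanned by $u$. Hence $I + ub^*$ has eigenvalues $1 + b^*u$ and $1$ with multiplicity $n-1$, and the determinant, being the product of the eigenvalues, equals $(1 + b^*u)\cdot 1^{\,n-1} = 1 + b^*u$.

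I expect the only genuine subtlety to be the rank-one identity itself; the reduction via factoring $A$ is immediate once invertibility is used. In the spectral argument, the step requiring a little care is the claim that the eigenvalue $0$ occurs with multiplicity $n-1$, which follows because $\ker b^*$ is $(n-1)$-dimensional and is annihilated by $ub^*$, so $I + ub^*$ restricts to the identity there. The Schur-complement route sidesteps this bookkeeping entirely, which is why it is the cleaner choice for a self-contained proof.
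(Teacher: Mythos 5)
Your main route is essentially the paper's own proof: both arguments factor out the invertible $A$ to reduce the claim to the rank-one identity $\det(I+ub^*)=1+b^*u$, and both establish that identity with a $2\times 2$ block-matrix determinant computation. The paper conjugates $\left[\begin{smallmatrix} I+xb^* & x \\ 0 & 1\end{smallmatrix}\right]$ by $\left[\begin{smallmatrix} I & 0 \\ b^* & 1\end{smallmatrix}\right]$ to reach the block-triangular matrix $\left[\begin{smallmatrix} I & x \\ 0 & b^*x+1\end{smallmatrix}\right]$, whereas you evaluate $\det\left[\begin{smallmatrix} I & -u \\ b^* & 1\end{smallmatrix}\right]$ via the two Schur complements; these are the same trick in slightly different packaging, so your primary argument and the paper's coincide in substance. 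Your spectral alternative, however, is a genuinely different argument, and it is also correct: rank considerations force $0$ to be an eigenvalue of $ub^*$ with algebraic multiplicity at least $n-1$ (geometric multiplicity bounds algebraic multiplicity from below), and the eigenvector $u$ --- or simply the trace --- supplies the remaining eigenvalue $b^*u$, giving $\det(I+ub^*)=(1+b^*u)\cdot 1^{n-1}$. What that route buys is independence from block-determinant formulas; what it costs is exactly the multiplicity bookkeeping you flag, plus the degenerate cases $u=0$ and $b^*u=0$, which the identity absorbs trivially (in the latter case all eigenvalues of $ub^*$ are $0$, consistent with the formula). Either route proves the lemma; there are no gaps.
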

\begin{proof}$\;$
	Since 
	\begin{eqnarray*}
		&&\begin{bmatrix}I&0\\b^*&1 \end{bmatrix}
		\begin{bmatrix}I+xb^*&x\\0&1 \end{bmatrix}
		\begin{bmatrix}I&0\\b^*&1 \end{bmatrix}^{-1} \\&=& 
		\begin{bmatrix}I&0\\b^*&1 \end{bmatrix}
		\begin{bmatrix}I+xb^*&x\\0&1 \end{bmatrix}
		\begin{bmatrix}I&0\\-b^*&1 \end{bmatrix}  = \begin{bmatrix}I&x\\0&b^*x+1 \end{bmatrix}, 
	\end{eqnarray*}
	we have

	\[
	\det\begin{bmatrix}I+xb^*&x\\0&1 \end{bmatrix} = 
	\det\begin{bmatrix}I&x\\0&b^* x+1 \end{bmatrix},
	\] 
	i.e., $\det(I+xb^*)=b^*x+1$. Thus,
	\[
	\det \left(A + x b^*\right) = \det(A)\,\det\left(I+A^{-1}xb^*\right) = \left(b^*A^{-1}x+1\right) \det (A).
	\]
\end{proof}
The rest of the paper includes the main results of eigenvalues and generalized eigenvectors of a GEU matrix. 
\section{Perturbation via a generalized eigenvector}
In what follows, we use the following notation. Let $A \in \mnc$ and $\lambda$ be an eigenvalue of $A$ with a Jordan block $J_r(\lambda)$ of size $r$. For a positive integer $m\leq r$, let $x_m$ denote a generalized eigenvector of rank $m$ of $A$ associated with $\lambda$. 

\begin{theorem}\label{thm21}
 $\bigg|\sigma(A+x_mb^*)\setminus\sigma(A)\bigg|\leq m$, i.e., at most $m$ eigenvalues of $A+x_mb^*$ are different from those of $A$, with the $m$ eigenvalues being zeros of 
  \be f(t)=(t-\lambda)^m-b^*\sum\limits_{i=0}^{m-1}x_{i+1}(t-\lambda)^i\ .\label{ft}\ee
\end{theorem}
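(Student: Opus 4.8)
The plan is to compute the characteristic polynomial of $A+x_mb^*$ explicitly and read off which of its roots can fall outside $\sigma(A)$. For $t\notin\sigma(A)$ the matrix $tI-A$ is invertible, so I would apply Lemma~\ref{MDL} to $tI-A$ with perturbing vectors $-x_m$ and $b$ (note $(tI-A)-x_mb^*=tI-(A+x_mb^*)$), which yields
\[
\det\big(tI-(A+x_mb^*)\big)=\big(1-b^*(tI-A)^{-1}x_m\big)\,\det(tI-A).
\]
Thus, for $t\notin\sigma(A)$, the characteristic polynomial of the GEU matrix equals $\det(tI-A)$ times the scalar factor $1-b^*(tI-A)^{-1}x_m$, and the whole problem reduces to understanding this factor.

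The key step is to evaluate $(tI-A)^{-1}x_m$ using the Jordan chain. Writing $N=A-\lambda I$, the chain relations $Ax_j=\lambda x_j+x_{j-1}$ say exactly $Nx_j=x_{j-1}$, so $N^k x_m=x_{m-k}$ for $0\le k\le m-1$ while $N^m x_m=0$. Because $N$ acts nilpotently on $x_m$, the Neumann expansion of $\big((t-\lambda)I-N\big)^{-1}$ terminates, giving the finite sum
\[
(tI-A)^{-1}x_m=\sum_{k=0}^{m-1}\frac{N^k x_m}{(t-\lambda)^{k+1}}=\sum_{j=1}^{m}\frac{x_j}{(t-\lambda)^{m-j+1}}.
\]
Substituting this, multiplying through by $(t-\lambda)^m$, and reindexing via $i=j-1$, I expect $(t-\lambda)^m\big(1-b^*(tI-A)^{-1}x_m\big)$ to collapse exactly to $f(t)=(t-\lambda)^m-b^*\sum_{i=0}^{m-1}x_{i+1}(t-\lambda)^i$.

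Finally I would assemble the factorization. Since $A$ has a Jordan block $J_r(\lambda)$ with $r\ge m$, the factor $(t-\lambda)^m$ divides $\det(tI-A)$, so $\det(tI-A)/(t-\lambda)^m$ is a genuine polynomial of degree $n-m$, and combining the two previous steps gives the polynomial identity
\[
\det\big(tI-(A+x_mb^*)\big)=f(t)\,\frac{\det(tI-A)}{(t-\lambda)^m},
\]
valid for all $t$ since both sides are polynomials agreeing off $\sigma(A)$. The second factor has all its roots inside $\sigma(A)$, so every eigenvalue of $A+x_mb^*$ lying outside $\sigma(A)$ must be a root of $f$; as $\deg f=m$, there are at most $m$ such values, which is precisely the claim. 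The main obstacle I anticipate is the bookkeeping in the resolvent computation---getting the termination of the Neumann series and the two index shifts to line up so that the rational expression really does simplify to the stated polynomial $f(t)$; the determinant and root-counting steps are then routine.
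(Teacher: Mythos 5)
Your proposal is correct and takes essentially the same route as the paper: both apply the Matrix Determinant Lemma to $\det\left(tI-(A+x_mb^*)\right)$, reduce everything to the resolvent identity $(tI-A)^{-1}x_m=\sum_{i=0}^{m-1}x_{i+1}/(t-\lambda)^{m-i}$, and then cancel $(t-\lambda)^m$ against the characteristic polynomial of $A$. The only differences are cosmetic --- you derive the resolvent identity from the terminating (telescoping) Neumann sum rather than by induction on $m$, and you make explicit the final polynomial-identity and root-counting step that the paper leaves implicit.
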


\begin{proof}$\;$ 
We first prove by induction the claim that \be (tI-A)^{-1}x_m=\sum\limits_{i=0}^{m-1}\frac{x_{i+1}}{(t-\lambda)^{m-i}}\ . \label{21}\ee
When $m=1$, $x_1$ is an eigenvector of $A$ associated with $\lambda$, then $(tI-A)x_1=(t-\lambda)x_1\ .$ Multiplying both sides by $\frac{(tI-A)^{-1}}{t-\lambda}$, we have that for $t\notin\sigma(A)$,
$$\frac{x_1}{t-\lambda}=(tI-A)^{-1}x_1\ .$$ Suppose that (\ref{21}) holds for $m=k$, when $m=k+1$, the equality
 $$(tI-A)x_{k+1}=tx_{k+1}-Ax_{k+1}=tx_{k+1}-\lambda x_{k+1}-x_{k}=(t-\lambda) x_{k+1}-x_{k}$$  yields $$\frac{x_{k+1}}{t-\lambda}=(tI-A)^{-1}x_{k+1}-\frac{(tI-A)^{-1}x_{k}}{t-\lambda}\ ,$$ i.e., $$(tI-A)^{-1}x_{k+1}=\frac{x_{k+1}}{t-\lambda}+\frac{(tI-A)^{-1}x_{k}}{t-\lambda}=\sum\limits_{i=0}^{k}\frac{x_{i+1}}{(t-\lambda)^{k+1-i}}\ $$ by the induction hypothesis, thus have shown the claim. Now for all $t\notin\sigma(A)$, by Lemma \ref{MDL}, 
\begin{eqnarray}
	\det\left(tI-\left(A+x_mb^*\right)\right)&=&\left(-b^*\left(tI-A\right)^{-1}x_m+1\right)\det(tI-A )\nonumber\\
	&=&\left(-b^*\sum\limits_{i=0}^{m-1}\frac{x_{i+1}}{(t-\lambda)^{m-i}}+1\right)\prod_j(t-\lambda_j)\ ,                       
\label{det}\end{eqnarray}
where $\lambda_j$'s are eigenvalues of $A$. 
Therefore, $m$ identical factors of $(t-\lambda)$ in $\prod_j(t-\lambda_j)$ cancel 
$(t-\lambda)^m$ in the denominator and the result follows.
\end{proof}
The result is in fact an extension of Brauer's Theorem (see \cite{B}). When $m=1$, $A+x_1b^*$ is an EU matrix of $A$ and (\ref{ft}) reduces to \be f(t)=(t-\lambda)-b^*x_{1}\ ,\nonumber\ee
which has a zero $t=\lambda+b^*x_{1}$ and agrees with the result in \cite{B}.

\begin{corollary}
In Theorem $\ref{thm21}$, if $b$ satisfies $b^*x_j=0$ for all $j\in \l m\r$, then all eigenvalues are preserved, so are generalized eigenvectors associated with $\lambda$, up to rank $m$.
\end{corollary}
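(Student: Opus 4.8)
The corollary has two parts. First I would handle the eigenvalue claim, which follows immediately from Theorem~\ref{thm21}: if $b^*x_j=0$ for all $j\in\l m\r$, then every summand in $f(t)=(t-\lambda)^m-b^*\sum_{i=0}^{m-1}x_{i+1}(t-\lambda)^i$ vanishes, so $f(t)=(t-\lambda)^m$, whose only zero is $t=\lambda$. Since the at-most-$m$ candidate new eigenvalues are the zeros of $f$, and they all coincide with the existing eigenvalue $\lambda\in\sigma(A)$, no eigenvalue is genuinely new; hence $\sigma(A+x_mb^*)=\sigma(A)$ and all eigenvalues are preserved.

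For the generalized eigenvectors, the plan is to show directly that each $x_k$ with $k\in\l m\r$ remains a generalized eigenvector of rank $k$ of the perturbed matrix $A+x_mb^*$ for the same eigenvalue $\lambda$. The key computation is
\[
(A+x_mb^*)x_k=Ax_k+x_m(b^*x_k)=\lambda x_k+x_{k-1}+x_m(b^*x_k).
\]
Because $k\le m$, the hypothesis gives $b^*x_k=0$, so the last term drops out and $(A+x_mb^*)x_k=\lambda x_k+x_{k-1}$, exactly the Jordan-chain recurrence that $x_k$ satisfies with respect to $A$. I would then argue inductively (or simply observe) that this identity forces $(A+x_mb^*-\lambda I)^k x_k=x_0=0$ while $(A+x_mb^*-\lambda I)^{k-1}x_k=x_1\neq 0$, using that the perturbation acts trivially on the whole chain $x_1,\dots,x_m$; thus $x_k$ is a generalized eigenvector of rank $k$ of $A+x_mb^*$ associated with $\lambda$.

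The only delicate point is that to push $(A+x_mb^*-\lambda I)$ repeatedly down the chain, one needs $b^*x_j=0$ at \emph{every} intermediate rank $j$ reached, and this is precisely why the hypothesis is stated for all $j\in\l m\r$ rather than just $j=m$: applying $A+x_mb^*-\lambda I$ to $x_k$ yields $x_{k-1}$, to which we apply the operator again, and so on down to $x_1$, each step requiring the corresponding inner product to vanish. Since $x_k$ lies in the span of $x_1,\dots,x_k\subseteq\{x_1,\dots,x_m\}$ throughout, the hypothesis covers every inner product that arises, so no obstruction appears. I expect this bookkeeping---verifying that the perturbation term annihilates at each stage of the descent---to be the main thing to state carefully, though computationally it is routine.
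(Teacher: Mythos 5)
Your proposal is correct and matches the paper's proof in both parts: the eigenvalue claim follows because the hypothesis kills the perturbation term in the characteristic polynomial (the paper cites its determinant identity, you equivalently note $f(t)=(t-\lambda)^m$), and the eigenvector claim rests on the same computation $(A+x_mb^*)x_j=Ax_j=\lambda x_j+x_{j-1}$ for $j\in\l m\r$. Your extra bookkeeping about descending the chain is just a spelled-out version of the paper's one-line observation, so the two arguments are essentially identical.
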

\begin{proof}$\;$ 
All eigenvalues are preserved since $$\det\left(tI-\left(A+x_mb^*\right)\right)=\prod_j(t-\lambda_j)$$ by (\ref{det}). Now that $\left(A+x_mb^*\right)x_j=Ax_j=\lambda x_j+x_{j-1}$ for $j\in \l m\r$, generalized eigenvectors  associated with $\lambda$ up to rank $m$ are also preserved.
\end{proof}

After a perturbation, we analyze generalized eigenvectors corresponding to three types of preserved eigenvalues: 
\begin{itemize}
	\item[(i)] An unchanged eigenvalue $\lambda$ in $J_r(\lambda).$
	\item[(ii)] A common eigenvalue $\lambda$ of $A$ and $A+x_mb^*$ in some other Jordan block, say $J_s(\lambda)$. 
	\item[(iii)] A common eigenvalue of $A$ and $A+x_mb^*$ that is not equal to $\lambda$. 
\end{itemize}
We next consider all three cases in the following sections and  give recusive formulae for generalized eigenvectors of $A+x_mb^*$ in terms of generalized eigenvectors of $A$.

\subsection{Eigenvalue $\lambda$ remaining in the block $J_r(\lambda)$}
After a perturbation, it is possible that some $\lambda$'s in $J_r(\lambda)$ remain unchanged (e.g., $m<r$).  We now give another sufficient condition.
\begin{theorem}
  In Theorem $\ref{thm21}$, if $b$ is such that $b^*x_j=0$ for $j=1,..., k\leq m,$ then $\bigg|\sigma(A+x_mb^*)\setminus\sigma(A)\bigg|\leq m-k$, i.e., at most $m-k$ eigenvalues of $A+x_mb^*$ are different from those of $A$.
\end{theorem}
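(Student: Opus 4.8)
The plan is to re-run the determinant computation from Theorem \ref{thm21} and exploit the fact that the orthogonality conditions on $b$ kill the lowest-order terms of the polynomial $f$ in \reff{ft}. Recall that the bracketed rational factor multiplying $\prod_j(t-\lambda_j)$ in \reff{det} is, after putting the sum over the common denominator $(t-\lambda)^m$, exactly $f(t)/(t-\lambda)^m$ with $f$ as in \reff{ft}. The key observation is that the term of $f$ indexed by $i$ carries the coefficient $b^*x_{i+1}$, so the hypothesis $b^*x_j=0$ for $j=1,\dots,k$ erases precisely the indices $i=0,1,\dots,k-1$, leaving
\be
f(t)=(t-\lambda)^m-\sum_{i=k}^{m-1}(b^*x_{i+1})(t-\lambda)^i .\nonumber
\ee
Every surviving power of $(t-\lambda)$ is now at least $k$, so I would factor out $(t-\lambda)^k$ and write $f(t)=(t-\lambda)^k g(t)$, where $g$ is a polynomial in $(t-\lambda)$ whose leading term is $(t-\lambda)^{m-k}$, hence $\deg g=m-k$.

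Next I would substitute this factorization into \reff{det}. Since $\lambda$ sits in a Jordan block $J_r(\lambda)$, its algebraic multiplicity in $A$ is at least $r\ge m$, so $(t-\lambda)$ divides $\prod_j(t-\lambda_j)$ at least $m$ times. Therefore
\be
\det\!\left(tI-(A+x_mb^*)\right)=\frac{(t-\lambda)^k g(t)}{(t-\lambda)^m}\prod_j(t-\lambda_j)=g(t)\,\frac{\prod_j(t-\lambda_j)}{(t-\lambda)^{m-k}},\nonumber
\ee
and the division by $(t-\lambda)^{m-k}$ yields a genuine polynomial because $m-k\le m$ does not exceed the multiplicity of $\lambda$. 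The second factor $\prod_j(t-\lambda_j)/(t-\lambda)^{m-k}$ has all its roots in $\sigma(A)$, so any eigenvalue of $A+x_mb^*$ lying outside $\sigma(A)$ must be a root of $g$. As $\deg g=m-k$, at most $m-k$ such eigenvalues can appear, which is the asserted bound.

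The step I expect to require the most care in the write-up is the bookkeeping of multiplicities during this cancellation. One must confirm that the $(t-\lambda)^m$ in the denominator never exceeds the multiplicity of $\lambda$ in $\prod_j(t-\lambda_j)$ — this is exactly where the standing hypothesis $m\le r$ is used — and then observe that the $k$ factors of $(t-\lambda)$ pulled out of $f$ are absorbed into $\sigma(A)$ and so contribute nothing new, while only the quotient $g$ of degree $m-k$ can produce updated eigenvalues. In effect the argument is the cancellation mechanism of Theorem \ref{thm21} executed again, now with $k$ additional factors of $(t-\lambda)$ stripped from the numerator by the orthogonality relations $b^*x_1=\cdots=b^*x_k=0$; the case $k=m$ recovers the conclusion that all eigenvalues are preserved.
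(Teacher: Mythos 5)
Your proof is correct and takes essentially the same route as the paper: the hypothesis $b^*x_j=0$ for $j=1,\dots,k$ annihilates precisely the terms $i=0,\dots,k-1$ in \reff{ft}, giving the factorization $f(t)=(t-\lambda)^k g(t)$ with $\deg g=m-k$, so every eigenvalue of $A+x_mb^*$ outside $\sigma(A)$ must be a zero of $g$. The only difference is that you make explicit the multiplicity bookkeeping in \reff{det} (that $(t-\lambda)^{m-k}$ genuinely divides $\prod_j(t-\lambda_j)$ because $m\le r$), which the paper compresses into ``the result follows.''
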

\begin{proof}$\;$ Since
  $$ f(t)=(t-\lambda)^k\left((t-\lambda)^{m-k}-b^*\sum\limits_{i=k}^{m-1}x_{i+1}(t-\lambda)^{i-k}\right)\ $$ by (\ref{ft}), the result follows.
\end{proof}
This result is a generalization of Theorem 5 in \cite{BCU}, which can be seen by letting $k=m-1$ above. Then (\ref{ft}) leads to  \be f(t)=(t-\lambda)^{m-1}(t-\lambda-b^*x_{m})\nonumber\ \ee and results in the three cases detailed in \cite{BCU}. An alternative description of the above result is that, at least $r-m+k$ eigenvalues are preserved in that same Jordan block. We next explore their corresponding generalized eigenvectors of $A+x_mb^*$.
\begin{theorem}\label{thm34}
   If $\lambda$ now has a Jordan block of size $\tilde{r}$, then for $t\leq \tilde{r}$ and $m+t\leq r$, $A+x_mb^*$ has a generalized eigenvector of rank $t$ associated with $\lambda$ given by
  
  $(1)$ when $t \leq m+1$, \be u_t=x_t+\sum\limits_{j=1}^{t-1} \beta_{j}^{(t)}x_j+\beta x_{m+t}\ ,\label{22eq24}\ee where $ \beta=\frac{-b^*x_1}{1+b^*x_{m+1}}$ and the coefficients $\beta_{j}^{(t)}$ satisfy a recurrence relation \be \beta_{j+1}^{(t)}=\beta_{j}^{(t-1)}\ , \hs j\in \l t-2\r \label{22eq25}\ee  and \be \beta_{1}^{(t)}= \frac{-b^*x_t-\sum\limits_{j=2}^{t-1}\beta_{j-1}^{(t-1)}(b^*x_j)-\beta b^*x_{m+t}}{b^*x_1}\ , \hs t\geq 2 \ ;\label{22eq26}\ee 
  
  $(2)$ when $t > m+1$ $($hence $t>1)$, \be u_t=x_t+\sum\limits_{j=1}^m \beta_{j}^{(t)}x_j+\beta x_{m+t}\ ,\label{22eq27}\ee where the coefficients $\beta_{j}^{(t)}$ satisfy the same recurrence relation as in $(\ref{22eq25})$ for $j\in \l m-1\r$, $\beta$ is as given in part $(1)$ and \be \beta_{1}^{(t)}= \frac{\beta_{m}^{(t-1)}-b^*x_t-\sum\limits_{j=2}^{m}\beta_{j-1}^{(t-1)}(b^*x_j)-\beta b^*x_{m+t}}{b^*x_1} \ .\label{22eq28}\ee

  \end{theorem}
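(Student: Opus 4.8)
The plan is to verify directly that the proposed vectors $u_1,u_2,\ldots$ form a Jordan chain of $A+x_mb^*$ at $\lambda$, that is, that $(A+x_mb^*-\lambda I)u_t=u_{t-1}$ for every admissible $t$, with the convention $u_0=0$. Once this single chain relation is established, the rank assertion comes for free: iterating gives $(A+x_mb^*-\lambda I)^t u_t=u_0=0$, while $(A+x_mb^*-\lambda I)^{t-1}u_t=u_1=x_1+\beta x_{m+1}\neq 0$, since $x_1$ and $x_{m+1}$ are linearly independent members of a Jordan chain of $A$ and $x_1\neq 0$. Thus $u_t$ has rank exactly $t$, and the whole theorem reduces to checking one identity and reading off the coefficient conditions it forces.

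The two ingredients for that identity are the Jordan relations of $A$ itself, $(A-\lambda I)x_j=x_{j-1}$ (with $x_0=0$), and the rank-one nature of the perturbation, so that $x_mb^*u_t=(b^*u_t)\,x_m$ is a scalar multiple of $x_m$. Applying $A+x_mb^*-\lambda I$ to the ansatz and using linearity, each $x_j$ occurring in $u_t$ is shifted down to $x_{j-1}$, and a single extra multiple of $x_m$ is produced. I would first settle the base case $t=1$: here $u_1=x_1+\beta x_{m+1}$, and one computes $(A+x_mb^*-\lambda I)u_1=(\beta+b^*u_1)\,x_m$, so demanding this vanish gives precisely $\beta(1+b^*x_{m+1})=-b^*x_1$, the stated value of $\beta$ in (\ref{22eq24}).

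For the inductive step I would compare coefficients of the vectors $x_1,x_2,\ldots$ on the two sides of $(A+x_mb^*-\lambda I)u_t=u_{t-1}$. After the downshift, the coefficient of $x_{m+t-1}$ equals $\beta$ on both sides, the coefficient of $x_{t-1}$ equals $1$ on both sides, and matching the intermediate terms forces the shift rule $\beta_{j+1}^{(t)}=\beta_j^{(t-1)}$, which is (\ref{22eq25}). The delicate part is the coefficient of $x_m$, and its bookkeeping is exactly what splits the theorem into its two regimes. For $2\le t\le m+1$ (part $(1)$) the vector $u_{t-1}$ carries $x_m$ only as the downshift of the leading term $x_{t}$ when $t=m+1$, so the rank-one contribution $b^*u_t$ to the $x_m$-coefficient must vanish; solving $b^*u_t=0$ for $\beta_1^{(t)}$, after substituting $\beta_j^{(t)}=\beta_{j-1}^{(t-1)}$, produces (\ref{22eq26}). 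For $t\ge m+2$ (part $(2)$) the index $t-1$ has passed $m$, so $u_{t-1}$ now genuinely contains an internal term $\beta_m^{(t-1)}x_m$; the matching condition becomes $b^*u_t=\beta_m^{(t-1)}$ rather than $0$, and the same elimination yields (\ref{22eq28}).

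The main obstacle is not any single computation but keeping the index ranges honest: one must track precisely when the downshifted vectors $x_{t-1}$ and $x_{m+t-1}$ and the rank-one contribution $x_m$ coincide, because it is these coincidences, occurring exactly at the crossover $t-1=m$, that change the recurrence for $\beta_1^{(t)}$ from (\ref{22eq26}) to (\ref{22eq28}). I would also note at the outset that the hypotheses $m+t\le r$ and $t\le\tilde r$ guarantee that every $x_{m+t}$ appearing in the ansatz is a genuine generalized eigenvector of $A$ and that a rank-$t$ chain at $\lambda$ indeed exists in $A+x_mb^*$, so that the constructed $u_t$ is nonzero and of the correct rank, and the formulas are not vacuous.
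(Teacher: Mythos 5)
Your proposal is correct and takes essentially the same route as the paper: verify the chain relation $(A+x_mb^*)u_t=\lambda u_t+u_{t-1}$ directly and read off the recurrences (\ref{22eq25})--(\ref{22eq28}) by matching coefficients of the $x_j$, with the $x_m$-coefficient (vanishing of $b^*u_t$ versus $b^*u_t=\beta_m^{(t-1)}$) producing the split into the two regimes. Your explicit handling of the base case, the crossover at $t=m+1$, and the rank verification via $(A+x_mb^*-\lambda I)^{t-1}u_t=u_1\neq 0$ only spells out details the paper leaves implicit (it proves case $(1)$ and declares case $(2)$ ``similar'').
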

\begin{proof}$\;$ 
We show that $$(A+x_mb^*)u_t=\lambda u_t+u_{t-1}\ $$ for $t \leq m+1$ 
and the case of $t> m+1$ is similar. 
Since \bea L.H.S.&= &(A+x_mb^*)\left(x_t+\sum\limits_{j=1}^{t-1} \beta_{j}^{(t)}x_j+\beta x_{m+t}\right)
\\&=&
\lambda x_t+x_{t-1}+\sum\limits_{j=1}^{t-1} \beta_{j}^{(t)}\lambda x_j+\sum\limits_{j=1}^{t-2} \beta_{j+1}^{(t)} x_j+\beta \lambda x_{m+t}+\beta x_{m+t-1}\\ 
&\hs&+\ b^*\left(x_t+\sum\limits_{j=1}^{t-1} \beta_{j}^{(t)}x_j+\beta x_{m+t}\right)x_m\ 
\eea 
and 
\bea R.H.S.&=&
\lambda \left(x_t+\sum\limits_{j=1}^{t-1} \beta_{j}^{(t)}x_j+\beta x_{m+t}\right)+\left(x_{t-1}+\sum\limits_{j=1}^{t-2} \beta_{j}^{(t-1)}x_j+\beta x_{m+t-1}\right)\ ,
\eea
the result follows from (\ref{22eq25}) and (\ref{22eq26}) .
\end{proof}
\begin{corollary}
$A+x_mb^*$ has an eigenvector associated with $\lambda$ given by
   $$ u_1=x_1-\frac{b^*x_1}{1+b^*x_{m+1}} x_{m+1}\ ;$$
In particular,
   $A+x_1b^*$ has an eigenvector associated with $\lambda$ given by
   \be u_1=x_1-\frac{b^*x_1}{1+b^*x_{2}} x_{2}\ .\label{2nd}\ee
\end{corollary}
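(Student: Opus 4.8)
The plan is to recognize that an eigenvector is precisely a generalized eigenvector of rank $1$, so the asserted formula for $u_1$ is nothing but the $t=1$ instance of Theorem~\ref{thm34}. First I would invoke part $(1)$ of that theorem with $t=1$: the constraint $t\leq m+1$ holds automatically, so formula (\ref{22eq24}) applies, and the sum $\sum_{j=1}^{t-1}\beta_j^{(t)}x_j$ is empty since its upper index is $0$. Hence $u_t$ collapses to $u_1=x_1+\beta x_{m+1}$ with $\beta=\frac{-b^*x_1}{1+b^*x_{m+1}}$, and substituting $\beta$ gives exactly the claimed expression.

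For completeness I would also record a short self-contained verification rather than merely citing the theorem, since the one-line computation isolates why $\beta$ must take the stated value. Writing $u_1=x_1+\beta x_{m+1}$ and using the Jordan-chain relations $Ax_1=\lambda x_1$ and $Ax_{m+1}=\lambda x_{m+1}+x_m$, I would compute
\[
(A+x_mb^*)u_1=\lambda x_1+\beta(\lambda x_{m+1}+x_m)+x_m\big(b^*x_1+\beta\,b^*x_{m+1}\big)
=\lambda u_1+\big(\beta+b^*x_1+\beta\,b^*x_{m+1}\big)x_m.
\]
Thus $u_1$ is an eigenvector for $\lambda$ if and only if the coefficient of $x_m$ vanishes, i.e. $\beta(1+b^*x_{m+1})=-b^*x_1$, which is exactly $\beta=\frac{-b^*x_1}{1+b^*x_{m+1}}$ (using the standing convention that the denominator is nonzero). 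The ``in particular'' statement is then immediate: setting $m=1$ replaces $x_{m+1}$ by $x_2$ and yields $u_1=x_1-\frac{b^*x_1}{1+b^*x_2}x_2$, recovering (\ref{2nd}).

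There is no genuine obstacle here; this is a specialization of Theorem~\ref{thm34}, and the only point requiring care is the bookkeeping of which term survives. The single ``spurious'' vector produced by the perturbation is $x_m$, arising both from $\beta A x_{m+1}$ and from the rank-one term $x_m b^* u_1$, and the entire content of the corollary is that one scalar $\beta$ suffices to annihilate it. Finally I would note that the hypotheses $t\leq\tilde r$ and $m+t\leq r$ of Theorem~\ref{thm34}, with $t=1$, reduce to $m+1\leq r$, so the formula is meaningful exactly when the generalized eigenvector $x_{m+1}$ of rank $m+1$ exists in the chain.
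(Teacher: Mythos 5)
Your proposal is correct and matches the paper's (implicit) argument: the corollary is stated there without proof precisely because it is the $t=1$ specialization of Theorem~\ref{thm34}(1), where the sum is empty and $u_1=x_1+\beta x_{m+1}$. Your supplementary direct verification is just the $t=1$ instance of the computation already carried out in the proof of that theorem, and your remark that the hypothesis $m+t\leq r$ reduces to $m+1\leq r$ (so that $x_{m+1}$ exists) is the right bookkeeping.
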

The expression (\ref{2nd}) offers insight into calculating eigenvectors of the EU matrix $A+x_1b^*$ and hence generalizes the results in \cite{Y, BCSU}. It is also a scalar multiple of the expression in Theorem 2 part (2.2) of \cite{BCU} and thus viewed as identical. We will see in the following section that there are other generic expressions for eigenvectors of $A+x_1b^*$ when $A$ has more than one Jordan block.
\subsection{A common eigenvalue $\lambda$ in a different Jordan block}
If besides a Jordan block $J_r(\lambda)$ of size $r$, $A$ has another Jordan block $J_s(\lambda)$ of size $s$ corresponding to $\lambda$, what are the generalized eigenvectors? For positive integers $m\leq r$ and $t\leq s$, let $x_m$ and $y_t$ denote generalized eigenvectors of rank $m$ and $t$ associated with $\lambda$ in two blocks respectively.
\begin{theorem}\label{thm36}
  $A+x_mb^*$ has a generalized eigenvector of rank $t$ associated with $\lambda$ given by 
 
 $(1)$ when $t \leq m$, \be v_t=y_t+\sum\limits_{j=1}^t \beta_{j}^{(t)}x_j\ ,\label{23eq24}\ee where the coefficients $\beta_{j}^{(t)}$ satisfy a recurrence relation \be \beta_{j+1}^{(t)}=\beta_{j}^{(t-1)}\ , \hs j\in \l t-1\r \label{23eq25}\ee and \be \beta_{1}^{(t)}= \frac{-b^*y_t-\sum\limits_{j=2}^{t}\beta_{j-1}^{(t-1)}(b^*x_j) }{b^*x_1} \ ;\label{23eq26}\ee
  $(2)$ when $t > m$ $($hence $t>1)$, \be v_t=y_t+\sum\limits_{j=1}^m \beta_{j}^{(t)}x_j\ ,\label{23eq27}\ee where the coefficients $\beta_{j}^{(t)}$ satisfy the same recurrence relation as $(\ref{23eq25})$ for $j\in \l m-1\r$ and \be \beta_{1}^{(t)}= \frac{\beta_{m}^{(t-1)}-b^*y_t-\sum\limits_{j=2}^{m}\beta_{j-1}^{(t-1)}(b^*x_j) }{b^*x_1} \ .\label{23eq28}\ee
 
  \end{theorem}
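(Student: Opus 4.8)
The plan is to verify directly that the proposed $v_t$ satisfies the Jordan-chain relation $(A+x_mb^*)v_t=\lambda v_t+v_{t-1}$, exactly as in the proof of Theorem \ref{thm34}; once this holds for every $t$ and $v_1$ is seen to be nonzero, the vectors $v_1,\dots,v_t$ form a genuine Jordan chain of $A+x_mb^*$ at $\lambda$, so $v_t$ indeed has rank $t$. The computation splits into the same two regimes as the statement, and in each I would expand both sides using $Ay_j=\lambda y_j+y_{j-1}$ and $Ax_j=\lambda x_j+x_{j-1}$, together with the fact that $b^*v_t$ is a scalar, so that the rank-one term $x_mb^*v_t$ contributes only a multiple of the single vector $x_m$.

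Concretely, for case $(1)$ ($t\le m$) I would write $v_t=y_t+\sum_{j=1}^{t}\beta_j^{(t)}x_j$ and compute
\be (A+x_mb^*)v_t=\lambda y_t+y_{t-1}+\lambda\sum_{j=1}^{t}\beta_j^{(t)}x_j+\sum_{j=1}^{t-1}\beta_{j+1}^{(t)}x_j+\Big(b^*y_t+\sum_{j=1}^{t}\beta_j^{(t)}(b^*x_j)\Big)x_m\ ,\nonumber\ee
having used $x_0=0$ to re-index the shifted sum. On the other side, $\lambda v_t+v_{t-1}=\lambda y_t+y_{t-1}+\lambda\sum_{j=1}^{t}\beta_j^{(t)}x_j+\sum_{j=1}^{t-1}\beta_j^{(t-1)}x_j$. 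The key structural input is that $y_t,x_1,\dots,x_m$ belong to a Jordan basis assembled from two distinct blocks of $\lambda$ and are therefore linearly independent, so I may equate coefficients vector by vector. Matching the coefficient of $x_j$ for $1\le j\le t-1$ forces $\beta_{j+1}^{(t)}=\beta_j^{(t-1)}$, which is \reff{23eq25}; and since $t\le m$ the right-hand side carries no $x_m$, so matching the coefficient of $x_m$ forces the parenthesis to vanish, which after substituting the recurrence yields \reff{23eq26}.

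For case $(2)$ ($t>m$) the truncated sum $\sum_{j=1}^{m}$ produces the same shifted-index bookkeeping, but now $v_{t-1}$ already contains the term $\beta_m^{(t-1)}x_m$ (this holds whether $t-1=m$, supplied by case $(1)$, or $t-1>m$, supplied by case $(2)$). Hence matching the coefficient of $x_m$ no longer sets the parenthesis to zero but equates it to $\beta_m^{(t-1)}$, and this single change is exactly what produces the extra summand in \reff{23eq28} relative to \reff{23eq26}; the coefficients of $x_1,\dots,x_{m-1}$ again give \reff{23eq25}. I would finish with the base case $t=1$, where $v_1=y_1+\beta_1^{(1)}x_1$ and the eigenvector condition $b^*y_1+\beta_1^{(1)}(b^*x_1)=0$ recovers \reff{23eq26}, and note that $v_1\neq0$ because its $y_1$-component is nonzero.

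The main obstacle I anticipate is not any single estimate but the index bookkeeping: keeping straight the shift $\sum\beta_j^{(t)}x_{j-1}=\sum\beta_{j+1}^{(t)}x_j$, the truncation of the sum at $m$ in case $(2)$, and above all the observation that distinguishes the two regimes, namely whether or not $v_{t-1}$ contributes an $x_m$ term. Getting that one coefficient right is what separates \reff{23eq26} from \reff{23eq28}, and it is the only place where the two cases genuinely differ.
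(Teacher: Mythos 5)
Your proposal is correct and follows essentially the same route as the paper: a direct verification that $(A+x_mb^*)v_t=\lambda v_t+v_{t-1}$, expanding both sides via the Jordan chain relations and matching coefficients on the linearly independent vectors $y_t,x_1,\dots,x_m$, with the recurrences \reff{23eq25}, \reff{23eq26}, \reff{23eq28} emerging exactly as you describe (the paper writes out only the case $t\leq m$ and declares $t>m$ similar, whereas you spell out the $x_m$-coefficient distinction and add the rank-$t$ justification via $v_1\neq 0$, which the paper leaves implicit).
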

\begin{proof}$\;$ 
We show that $$(A+x_mb^*)v_t=\lambda v_t+v_{t-1}\ $$ for $t \leq m$ 
and the case of $t> m$ is similar. 
Since \bea L.H.S.&=&(A+x_mb^*)\left(y_t+\sum\limits_{j=1}^t \beta_{j}^{(t)}x_j\right)
\\&=&
\lambda y_t+y_{t-1}+\sum\limits_{j=1}^{t} \beta_{j}^{(t)}\lambda x_j+\sum\limits_{j=1}^{t-1} \beta_{j+1}^{(t)} x_j+b^*\left(y_t+\sum\limits_{j=1}^t \beta_{j}^{(t)}x_j\right)x_m\ 
\eea 
and 
\bea R.H.S.&=&
\lambda\left(y_t+\sum\limits_{j=1}^t \beta_{j}^{(t)}x_j\right)+\left(y_{t-1}+\sum\limits_{j=1}^{t-1} \beta_{j}^{(t-1)}x_j\right)\ ,
\eea
the result follows from (\ref{23eq25}) and (\ref{23eq26}) .
\end{proof}
\begin{corollary}
    $A+x_mb^*$ $(m\leq r)$ share an eigenvector associated with $\lambda$ given by 
 $$ v_1=y_1 -\frac{b^*y_1}{b^*x_1} x_1\ .$$
\end{corollary}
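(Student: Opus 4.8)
The plan is to obtain this as the rank-one specialization ($t=1$) of Theorem \ref{thm36} and then to observe the feature that justifies the word ``share.'' Since the hypothesis $m\leq r$ forces $m\geq 1$, the value $t=1$ always falls into case $(1)$ of the theorem, i.e.\ $t\leq m$. Setting $t=1$ in $(\ref{23eq24})$ collapses the sum $\sum_{j=1}^{t}\beta_j^{(t)}x_j$ to the single term $\beta_1^{(1)}x_1$, so $v_1=y_1+\beta_1^{(1)}x_1$; and setting $t=1$ in $(\ref{23eq26})$ renders the sum $\sum_{j=2}^{1}$ empty, leaving $\beta_1^{(1)}=-b^*y_1/(b^*x_1)$. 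Substituting produces the stated formula immediately.

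To confirm the expression and to explain why the \emph{same} $v_1$ serves every $A+x_mb^*$ with $m\leq r$, I would next verify $(A+x_mb^*)v_1=\lambda v_1$ directly. Because $x_1$ and $y_1$ are ordinary eigenvectors, $Ax_1=\lambda x_1$ and $Ay_1=\lambda y_1$; expanding $(A+x_mb^*)v_1$ therefore reduces the $A$-part to $\lambda v_1$ and leaves a single contribution along $x_m$ with scalar coefficient $b^*v_1=b^*y_1-\tfrac{b^*y_1}{b^*x_1}(b^*x_1)$. The crux is that this coefficient is identically zero, so the $x_m$ term — the only place where the rank $m$ enters — cancels completely. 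What remains is exactly $\lambda v_1$, with no dependence on $m$, which is precisely what ``share'' asserts.

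The last point to record is that $v_1\neq 0$: as eigenvectors lying in two distinct Jordan blocks of $\lambda$, the vectors $x_1$ and $y_1$ are linearly independent, so the combination $y_1-\tfrac{b^*y_1}{b^*x_1}x_1$ cannot vanish. There is no substantive obstacle here beyond bookkeeping; the one genuine restriction is the nondegeneracy $b^*x_1\neq 0$ hidden in the denominator, which the paper's standing convention on fractional results already absorbs.
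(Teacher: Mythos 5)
Your proposal is correct, and its core coincides with the paper's (implicit) proof: the paper states this corollary immediately after Theorem~\ref{thm36} with no separate argument, intending exactly the specialization you perform first --- set $t=1$ in case $(1)$ of Theorem~\ref{thm36}, so the sum in (\ref{23eq24}) collapses to $\beta_1^{(1)}x_1$ and the empty sum in (\ref{23eq26}) leaves $\beta_1^{(1)}=-b^*y_1/(b^*x_1)$. Your second step goes beyond what the paper records and is worth keeping: verifying $(A+x_mb^*)v_1=\lambda v_1$ directly by noting $b^*v_1=b^*y_1-\frac{b^*y_1}{b^*x_1}(b^*x_1)=0$ is a self-contained elementary proof that never invokes the theorem, it isolates precisely why the \emph{same} $v_1$ works for every $m\leq r$ (the only $m$-dependent term is $x_m(b^*v_1)$, which vanishes), and it simultaneously justifies the paper's follow-up remark that $v_1$ is also an eigenvector of $A$ itself --- a claim the paper asserts without proof. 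Your observation that $v_1\neq 0$, because $x_1$ and $y_1$ come from distinct Jordan blocks and are therefore linearly independent, closes a small gap the paper leaves implicit; and you are right that the hidden hypothesis $b^*x_1\neq 0$ is absorbed by the paper's standing convention on fractional expressions.
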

Note that when $m=1$, the above result is a special case of that in \cite{Y, BCSU} where both eigenvalues involved are equal to $\lambda$. It also agrees with the case $j=1$ in Theorem 2 part (1) of \cite{BCU}. Furthermore, $v_1$ is also an eigenvector of $A$ associated with $\lambda$. Therefore, all GEU matrices as such share an eigenvector with $A$.
\subsection{A common eigenvalue that is different than $\lambda$}
All eigenvalues of $A$ not equal to $\lambda$ will remain to be eigenvalues of $A+x_mb^*$.
\begin{theorem}\label{thm38}
 Suppose an eigenvalue $\mu\neq\lambda$ of $A$ has a Jordan block of size $\gamma$ and let $z_t$ be an associated generalized eigenvector of rank $t\leq \gamma$. Then $A+x_mb^*$ has a generalized eigenvector of rank $t$ associated with $\mu$, given by \be w_t=z_t+\sum\limits_{j=1}^m \beta_{j}^{(t)}x_j\ ,\label{eq24}\ee where the coefficients $\beta_{j}^{(t)}$ satisfy a recurrence relation \be \beta_{j}^{(t)}=(\mu-\lambda)^{j-m}\left(\beta_{m}^{(t)}-\sum\limits_{i=0}^{m-1-j}(\mu-\lambda)^{i}\beta_{m-1-i}^{(t-1)}\right)\ , \hs j\in \l m-1\r \label{eq25}\ee  with \be \beta_{m}^{(t)}=\frac{(\mu-\lambda)^{m}\left(b^*z_t-\beta_{m}^{(t-1)}\right)-\sum\limits_{j=1}^{m}(b^*x_j)\sum\limits_{i=0}^{m-1-j} (\mu-\lambda)^{i+j} \beta_{m-1-i}^{(t-1)}}{(\mu-\lambda)^{m+1}-\sum\limits_{j=1}^m (\mu-\lambda)^{j}(b^*x_j)}\ .\label{eq26}\ee
\end{theorem}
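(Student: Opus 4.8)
The plan is to verify directly that the vector $w_t$ defined in \reff{eq24} satisfies the generalized-eigenvector relation $(A+x_mb^*)w_t=\mu w_t+w_{t-1}$, following the template of the proofs of Theorems~\ref{thm34} and~\ref{thm36}. First I would expand the left-hand side using $Az_t=\mu z_t+z_{t-1}$, the chain relations $Ax_j=\lambda x_j+x_{j-1}$ (with $x_0=0$), and the observation that $x_mb^*w_t=(b^*w_t)x_m$ is a scalar multiple of $x_m$, where $b^*w_t=b^*z_t+\sum_{j=1}^m\beta_j^{(t)}(b^*x_j)$. Expanding the right-hand side as $\mu z_t+\mu\sum_{j=1}^m\beta_j^{(t)}x_j+z_{t-1}+\sum_{j=1}^m\beta_j^{(t-1)}x_j$, the terms $\mu z_t$ and $z_{t-1}$ cancel against their counterparts on the left, leaving a relation purely among the $x_j$.

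Since $\mu\neq\lambda$, the chain $z_1,\dots,z_t$ lies in a generalized eigenspace disjoint from the span of $x_1,\dots,x_m$, so I can compare coefficients of the linearly independent vectors $x_1,\dots,x_m$ one at a time. The coefficients of $x_1,\dots,x_{m-1}$ give the first-order recurrence $\beta_{j+1}^{(t)}=(\mu-\lambda)\beta_j^{(t)}+\beta_j^{(t-1)}$, while the coefficient of $x_m$ (the only one receiving the rank-one contribution, since the shift $\sum\beta_{j+1}^{(t)}x_j$ stops at $x_{m-1}$) gives the scalar equation $(\lambda-\mu)\beta_m^{(t)}+b^*w_t-\beta_m^{(t-1)}=0$.

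Next I would solve the recurrence downward from $j=m$: a short induction on $m-j$ shows it is equivalent to the closed form \reff{eq25}, where the case $j=m$ holds trivially as an empty sum. I would then substitute this closed form for each $\beta_j^{(t)}$ into $b^*w_t$ in the $x_m$-equation and split the result into the part proportional to $\beta_m^{(t)}$ and the part depending only on the previous level $\beta_{\cdot}^{(t-1)}$ and on $b^*z_t$. Collecting the $\beta_m^{(t)}$ terms yields a linear equation whose coefficient is $-(\mu-\lambda)+\sum_{j=1}^m(\mu-\lambda)^{j-m}(b^*x_j)$; multiplying through by $-(\mu-\lambda)^m$ to clear the negative powers then produces the denominator $(\mu-\lambda)^{m+1}-\sum_{j=1}^m(\mu-\lambda)^j(b^*x_j)$ and the numerator of \reff{eq26}.

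I expect the main obstacle to be the bookkeeping in this final substitution: the closed form \reff{eq25} carries a nested sum in $i$, so inserting it into $\sum_{j=1}^m\beta_j^{(t)}(b^*x_j)$ creates a double sum $\sum_j\sum_i(\mu-\lambda)^{i+j}(b^*x_j)\beta_{m-1-i}^{(t-1)}$ that must be reorganized carefully to match \reff{eq26} term for term. It is also worth recording that the denominator equals $(\mu-\lambda)f(\mu)$ with $f$ as in \reff{ft}, so it is nonvanishing exactly when $\mu$ is not one of the at most $m$ updated eigenvalues; this justifies the division under the paper's standing convention on nonzero denominators.
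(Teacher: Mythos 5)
Your proposal is correct and follows essentially the same route as the paper: expand $(A+x_mb^*)w_t$ and $\mu w_t+w_{t-1}$ using the Jordan chain relations, match coefficients of $x_1,\dots,x_m$, and observe that the resulting equations are exactly the recurrence \reff{eq25} (in first-order form $\beta_{j+1}^{(t)}=(\mu-\lambda)\beta_j^{(t)}+\beta_j^{(t-1)}$) together with the scalar equation that yields \reff{eq26}; you simply derive these formulas where the paper states them and leaves the verification to the reader. Your closing remark that the denominator of \reff{eq26} equals $(\mu-\lambda)f(\mu)$, and hence is nonzero precisely because $\mu$ is a preserved eigenvalue, is a nice addition not made explicit in the paper.
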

\begin{proof}$\;$ 
We show that $$(A+x_mb^*)w_t=\mu w_t+w_{t-1}\ .$$ 
Since \bea L.H.S.&= &(A+x_mb^*)\left(z_t+\sum\limits_{j=1}^m \beta_{j}^{(t)}x_j\right)
\\&=&
\mu z_t+z_{t-1}+\sum\limits_{j=1}^{m-1} \left(\beta_{j}^{(t)}\lambda+\beta_{j+1}^{(t)}\right) x_j+\left(b^*z_t+\beta_{m}^{(t)}\lambda+\sum\limits_{j=1}^m \beta_{j}^{(t)}\left(b^*x_j\right)\right)x_m\ 
\eea 
and 
\bea R.H.S.&=&
\mu z_t+z_{t-1}+\sum\limits_{j=1}^{m} \left(\beta_{j}^{(t)}\mu+\beta_{j}^{(t-1)}\right) x_j\ ,
\eea
it remains to show that \be\beta_{j}^{(t)}\lambda+\beta_{j+1}^{(t)}=\beta_{j}^{(t)}\mu+\beta_{j}^{(t-1)}\ , \hs j\in \l m-1\r \label{eq27}\ee and \be b^*z_t+\beta_{m}^{(t)}\lambda+\sum\limits_{j=1}^m \beta_{j}^{(t)}\left(b^*x_j\right)=\beta_{m}^{(t)}\mu+\beta_{m}^{(t-1)}\ .\label{eq28}\ee
One can easily verify that (\ref{eq27}) follows directly from (\ref{eq25}), while (\ref{eq28}) follows from (\ref{eq25}) and (\ref{eq26}) .
\end{proof}
\begin{corollary}
   If $z_1$ is an eigenvector of $A$ associated with $\mu\neq\lambda$, then an eigenvector of $A+x_mb^*$ associated with $\mu$ is \be w_1=z_1+\frac{b^*z_1}{(\mu-\lambda)^{m+1}-\sum\limits_{j=1}^m (\mu-\lambda)^{j}(b^*x_j)}\sum\limits_{j=1}^m (\mu-\lambda)^{j}x_j\ .\nonumber\ee 
\end{corollary}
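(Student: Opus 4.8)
The plan is to obtain this as the $t=1$ specialization of Theorem \ref{thm38}. Since a generalized eigenvector of rank $1$ is an ordinary eigenvector, setting $t=1$ in (\ref{eq24}) gives $w_1 = z_1 + \sum_{j=1}^m \beta_j^{(1)} x_j$, and the entire task reduces to evaluating the coefficients $\beta_j^{(1)}$. The key simplification is the paper's standing convention that any quantity carrying a $0$ subscript or superscript vanishes: thus $z_0 = 0$ and $\beta_j^{(0)} = 0$ for every $j$. It is precisely this convention that collapses the two-index recurrence of Theorem \ref{thm38} into a closed form in one step, with no further appeal to the defining relation $(A+x_mb^*)w_1 = \mu w_1$ beyond what the theorem already guarantees.

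First I would evaluate (\ref{eq26}) at $t=1$. Because $\beta_m^{(0)} = 0$ and each $\beta_{m-1-i}^{(0)} = 0$, the numerator loses its subtracted term and its entire double sum, leaving only $(\mu-\lambda)^m\, b^*z_1$, while the denominator is unchanged; hence
\be \beta_m^{(1)} = \frac{(\mu-\lambda)^m\, b^*z_1}{(\mu-\lambda)^{m+1}-\sum_{j=1}^m (\mu-\lambda)^j (b^*x_j)}\ .\nonumber\ee
Next I would feed this into (\ref{eq25}) at $t=1$. Again the sum $\sum_{i=0}^{m-1-j}(\mu-\lambda)^i \beta_{m-1-i}^{(0)}$ vanishes term by term, so the recurrence degenerates to the single scaling $\beta_j^{(1)} = (\mu-\lambda)^{j-m}\beta_m^{(1)}$ for $j\in\langle m-1\rangle$. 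Substituting the value of $\beta_m^{(1)}$ cancels $(\mu-\lambda)^{-m}$ against $(\mu-\lambda)^m$ and yields the uniform expression $\beta_j^{(1)} = (\mu-\lambda)^j\, b^*z_1 / D$, where $D$ denotes the common denominator above. One checks that this formula also reproduces the separately defined $j=m$ case, so it holds for all $j\in\langle m\rangle$; inserting these coefficients into $w_1 = z_1 + \sum_{j=1}^m \beta_j^{(1)} x_j$ and factoring out $b^*z_1/D$ gives exactly the stated formula.

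There is no genuine obstacle here, since the corollary is a pure specialization of an already-proved theorem. The only point requiring care is the index bookkeeping: I would verify explicitly that every $(0)$-superscripted term really drops out of both (\ref{eq25}) and (\ref{eq26}), and that the closed form derived for $j\le m-1$ is consistent with the $\beta_m^{(1)}$ obtained independently from (\ref{eq26}). Once that consistency is confirmed, the displayed expression for $w_1$ follows immediately.
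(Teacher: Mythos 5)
Your proposal is correct and follows exactly the route the paper intends: the corollary is stated without proof as the $t=1$ specialization of Theorem \ref{thm38}, and your computation—using the zero-superscript convention to kill every $\beta^{(0)}$ term in (\ref{eq25}) and (\ref{eq26}), collapsing the recurrence to $\beta_j^{(1)}=(\mu-\lambda)^j\,b^*z_1/D$—is precisely that specialization, including the consistency check at $j=m$.
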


Note that when $m=1$, an eigenvector of $A+x_1b^*$ associated with $\mu$ is \be w_1=z_1 +\frac{b^*z_1}{\mu-\lambda -b^*x_1} x_1\ ,\nonumber\ee which has been proved in \cite{Y, BCSU, BCU}.

\subsection{A comprehensive example}
We provide an example in this section to show the computation of all cases discussed previously. It is a real matrix example for calculation simplicity, even though the results work for complex matrices in general. Let $e_i$ denote the $i$-th column of an $11\times 11$ identity matrix, i.e., the $i^{th}$ standard basis vector.

Let $$A=J_6(2)\oplus J_3(2)\oplus J_2(1)$$ and a generalized eigenvector $x_2$ corresponding to 2 in $J_6(2)$ be used to obtain $A+x_2b^*$. Thus $\lambda=2, \mu=1, r=6, s=3$ and $m=2$. Select a  vector $b=3e_1-5e_2+e_7+e_{10}$ (randomly) and apply results in previous sections to find generalized eigenvectors of our interest for $A+x_2b^*$. To illustrate the effectiveness of the theorems, we calculate a generalized eigenvector for each of the following: Theorem $\ref{thm34}$ $(1)$ and $(2)$, Theorem $\ref{thm36}$ $(1)$ and $(2)$, as well as Theorem $\ref{thm38}$. 

Clearly $$x_j=\sum\limits_{i=1}^{j}e_i\ (j=1,...,6),\ y_{j-6}=\sum\limits_{i=7}^{j}e_i\ (j=7,8,9),\ z_{j-9}=\sum\limits_{i=10}^{j}e_i\ (j=10,11)$$ are generalized eigenvectors corresponding to 2 in $J_6(2)$, 2 in $J_3(2)$ and 1 in $J_2(1)$ respectively. One can easily verify by doing a Jordan decomposition that, after the perturbation, the Jordan form of $A+x_2b^*$ is $$J=J_4(2)\oplus J_3(2)\oplus J_2(1)\oplus J_1(3)\oplus J_1(-1)\ ,$$ thus only the first Jordan block of $A$ is lost and the new eigenvalues 3 and -1 of $A+x_2b^*$ are zeros of 
$$ f(t)=(t-2)^2-3-(-2)(t-2)$$
as indicated by Theorem \ref{thm21}. We now consider the first three blocks of $J$ that contain the preserved eigenvalues.
\begin{itemize}
\item  For $J_4(2)$:
\begin{itemize}
    \item [(i)] $u_3=x_3+\frac{40}{9}x_1+\frac{8}{3}x_2+3x_5$ by (\ref{22eq24})
    \item [(ii)] $u_4=x_4+\frac{176}{27}x_1+\frac{40}{9}x_2+3x_6$ by (\ref{22eq27})
    
    where the coefficients were obtained in the following order: $\beta=3$, $\beta_{2}^{(3)}=\beta_{1}^{(2)}=\frac{8}{3}$, $\beta_{2}^{(4)}=\beta_{1}^{(3)}=\frac{40}{9}$  by (\ref{22eq25}) and (\ref{22eq26}), and $\beta_{1}^{(4)}=\frac{176}{27}$ by (\ref{22eq28});
\end{itemize}
\item  For $J_3(2)$:
\begin{itemize}
    \item [(i)] $v_2=y_2-\frac{5}{9}x_1-\frac{1}{3}x_2$ by (\ref{23eq24})
    \item [(ii)] $v_3=y_3-\frac{22}{27}x_1-\frac{5}{9}x_2$ by (\ref{23eq27})
    
    where the coefficients were obtained in the following order: $\beta_{2}^{(2)}=\beta_{1}^{(1)}=-\frac{1}{3}$, $\beta_{2}^{(3)}=\beta_{1}^{(2)}=-\frac{5}{9}$  by (\ref{23eq25}) and (\ref{23eq26}), and $\beta_{1}^{(3)}=-\frac{22}{27}$ by (\ref{23eq28});
\end{itemize}
\item  For $J_2(1)$: \hs $w_2=z_2-\frac{1}{4}x_1$ by (\ref{eq24})

where the coefficients were calculated in the order of $\beta_{2}^{(1)}=\frac{1}{4}$, $\beta_{1}^{(1)}=-\frac{1}{4}$, $\beta_{2}^{(2)}=0$ and $\beta_{1}^{(2)}=-\frac{1}{4}$  by (\ref{eq25}) and (\ref{eq26}).
\end{itemize}
One can verify that the vectors obtained above are indeed generalized eigenvectors of $A+x_2b^*$ of corresponding ranks.
\section{Discussion}
We considered a rank one perturbation $A+xb^*$ when $x$ is a generalized eigenvector and $b$ is an arbitrary vector. As a result, we gave a function whose zeros are the updated eigenvalues of a GEU matrix $A+xb^*$, though the zeros can not be specified explicitly in general yet. We also provided the generalized eigenvectors of $A+xb^*$ as a linear combination of those of $A$, whose coefficients can be found recursively. 

It remains to be an open problem how to express the generalized eigenvectors of $A+xb^*$ associated with the brand new eigenvalues after a perturbation. There seems to be none available results even for the case of $x$ being an ordinary eigenvector. There are wide applications of EU matrices, however, applications of the GEU type might be in broader areas, some of which are still to be explored. 

\newpage

\nocite{*}

\begin{appendices}
\end{appendices}
\end{document}